\newcommand{\cat}{%
  \mathbin{\mathpalette\dotieconcat\relax}%
}
\newcommand{\dotieconcat}[2]{
  \text{\raisebox{.8ex}{$\smallfrown$}}%
}
\setlist[1]{labelindent=\parindent}
\setlist[enumerate,1]{label = \arabic*.,ref   = \arabic*}
\setlist[enumerate,2]{label = \alph*),ref   = \theenumi\alph*)}
\setlist[enumerate,3]{label = \roman*),ref   = \theenumii.\roman*}
\newcommand{\from}{\colon}
\newcommand{\implica}{\rightarrow}
\newcommand{\coimplica}{\leftrightarrow}
\renewcommand{\phi}{\varphi}
\renewcommand{\epsilon}{\varepsilon}
\renewcommand{\models}{\vDash}
\newcommand{\monster}{\mathfrak U}
\newcommand{\sse}{\Leftrightarrow}
\newcommand{\sop}[1]{\ensuremath{#1\mathsf{-SOP}}}
\newcommand{\sigof}[1]{\ensuremath{\Sigma_{\mathsf{\MakeUppercase{#1}}}}}
\newcommand{\sopp}[1]{\sop{\sigof{#1}}}
\DeclareMathOperator{\age}{Age}
\DeclarePairedDelimiter{\set}{\{}{\}}
\DeclarePairedDelimiter{\abs}{\lvert}{\rvert}
\DeclarePairedDelimiter{\seq}{\langle}{\rangle}
\theoremstyle{definition}
\newtheorem{defin}{Definition}[section]
\newtheorem{thm}[defin]{Theorem}
\newtheorem{pr}[defin]{Proposition}
\newtheorem{eg}[defin]{Example}
\newtheorem{rem}[defin]{Remark}
\newtheorem{question}[defin]{Question}
\theoremstyle{theorem}
\newtheorem*{unmdthm}{Theorem}
\title{Model-theoretic dividing lines via posets}
\author{Dar\'io Garc\'ia}
\address{Dar\'io Garc\'ia, Departamento de Matemáticas. Universidad de los Andes, Carrera 1 No. 18A-10, Edificio H, Bogot\'a 111711, Colombia.}
\email{da.garcia268@uniandes.edu.co}
\author{Rosario Mennuni}
\address{Rosario Mennuni, Dipartimento di Matematica, Universit\`a di Pisa, Largo Bruno Pontecorvo 5, 56127 Pisa, Italy}
\email{R.Mennuni@posteo.net}
\date{\today}
 \keywords{Model theory, neostability theory, dividing lines}
 \subjclass[2020]{03C45.}
\begin{document}
\maketitle
\begin{abstract}
We show that for each property $\mathsf{P}\in \set{\mathsf{OP}, \mathsf{IP}, \mathsf{TP}_1, \mathsf{TP}_2, \mathsf{ATP}, \mathsf{SOP}_3}$ there is a poset $\sigof{\mathsf{P}}$ such that a theory has property $\mathsf{P}$ if and only if some model interprets a poset in which $\sigof{\mathsf{P}}$ can be embedded. We also introduce a new property $\mathsf{SUP}$,  consistent with $\mathsf{NIP}_2$ and implying $\mathsf{ATP}$ and $\mathsf{SOP}$.
\end{abstract}
\section*{Introduction}
One of the central themes of contemporary model theory is the study of \emph{dividing lines},  which are properties used to assess the combinatorial and geometrical ``tameness'' of first-order theories. Several of these are obtained by declaring that a theory is ``wild'' if a certain pattern may be found in a family of uniformly definable sets, and ``tame'' otherwise.

The archetypical dividing line is the \emph{order property} $\mathsf{OP}$. A theory is \emph{stable} iff no formula defines a linear order on an infinite (not necessarily definable) set, which is equivalent to \emph{not} having the order property, or being $\mathsf{NOP}$. Stability theory has been the topic of several monographs~\cite{baldwin_fundamentals_1988, buechler_essential_2017, pillay_geometric_1996} and is featured in many courses in model theory~\cite{poizat_course_2000, tent_course_2012}.
In addition to being crucial in Shelah's classification program~\cite{shelah_classification},  stability played a key role in applications of model theory to algebra~\cite{bouscaren}.

In the last 30 years other notable dividing lines, in large part defined by Shelah ---~a recent exception is the \emph{antichain tree property} $\mathsf{ATP}$, defined by Ahn and Kim~\cite{ahnkimatp}~---  have been studied in detail and have produced important applications in geometry and combinatorics.
For instance, $\mathsf{OP}$ is equivalent~\cite{shelah_classification} to the disjunction of the \emph{independence property} $\mathsf{IP}$  with the \emph{tree property} $\mathsf{TP}$. The (lack of the) independence property (see~\cite{simon_guide_2015}) was used, among other things, in the proof of the celebrated \emph{Pillay conjectures} on o-minimal theories~\cite{hrushovski_nip_2011}.  The negation of the tree property corresponds to \emph{simplicity} of a theory \cite{casanovas_simple_2011, kim_simplicity_2014, wagner_simple_2000}, and $\mathsf{TP}$ is itself equivalent \cite{shelah_simple_1980} to the disjunction of the tree properties of the first and second kind $\mathsf{TP}_1$ and $\mathsf{TP}_2$ (see e.g.~\cite{ahnkimatp} for their definitions).

Such a plethora of properties (whose study is known as \emph{neostability theory}) calls for a uniform treatment, and while one was proposed in \cite{shelah_what_1999}, to be best of the authors' knowledge it never picked up considerable steam. Here, we show that several dividing lines may be defined parametrically, where the parameter in question may be taken to be an infinite poset, or a hereditary class of finite posets.

Recall that the \emph{strict order property} is defined by declaring  a theory to be $\mathsf{SOP}$ iff it has a model interpreting a poset with an infinite chain, that is, a poset in which $\omega$ may be embedded. We define a family of properties $\sop{\Sigma}$ by replacing the $\omega$ in the above definition with an arbitrary poset $\Sigma$,  and show that choosing $\Sigma$ carefully allows to define in this fashion several known dividing lines.

An immediate consequence of the definitions, formally given in \Cref{sec:defs}, is the introduction of a new property $\mathsf{SUP}$,  which implies being on the ``wild'' side of all known dividing lines, with the exception of those higher in the $\mathsf{IP}_k$ hierarchy~\cite{onndep}. Namely, $\mathsf{SUP}$ implies $\mathsf{SOP}$ (hence $\mathsf{TP}_1$) and $\mathsf{ATP}$ (hence $\mathsf{TP}_2$, hence $\mathsf{IP}$), and there is a theory which is $\mathsf{SUP}$ and $\mathsf{NIP}_2$.

The rest of the paper is mostly devoted to proving the following theorem.
\begin{unmdthm}
  For each property $\mathsf{P}\in \set{\mathsf{OP}, \mathsf{IP}, \mathsf{SOP}, \mathsf{SUP}, \mathsf{TP}_1, \mathsf{TP}_2, \mathsf{ATP}, \mathsf{SOP}_3}$ there is a poset $\sigof{\mathsf{P}}$ such that a theory has $\mathsf{P}$ if and only if some model interprets a poset in which $\sigof{\mathsf{P}}$ can be embedded.
\end{unmdthm}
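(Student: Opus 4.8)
The plan is to reduce the statement to the following reformulation of $\sop{\Sigma}$, obtained by unwinding the definition through the down-set construction: $T$ has $\sop{\Sigma}$ if and only if there are a formula $\varphi(x;\bar y)$ and tuples $(a_s)_{s\in\Sigma}$ in $\monster\models T$ with $\varphi(\monster;a_s)\subseteq\varphi(\monster;a_t)\sse s\le_\Sigma t$ for all $s,t\in\Sigma$ (here the displayed slice-inclusion is a definable preorder, and its quotient is the required interpretable poset, into which $\Sigma$ then embeds via $s\mapsto a_s$). For $\mathsf{SOP}$ there is nothing to prove: $\sigof{sop}$ is the chain $\omega$, which is how $\sop{\omega}=\mathsf{SOP}$ was set up. For $\mathsf{SUP}$ there is again nothing to prove beyond the definition, once one notes that the poset $\sigof{sup}$ witnessing $\mathsf{SUP}=\sop{\sigof{sup}}$ embeds $\omega$ and every poset produced below, so that $\mathsf{SUP}$ lies above all the remaining properties. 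The content is thus the six classical properties, and throughout I use the elementary fact that a poset embedding preserves \emph{and reflects} $\le$, so that incomparability is absolute while ``having a common lower bound'' is not.

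I would treat $\mathsf{OP}$ and $\mathsf{IP}$ first, as they go through directly. For $\mathsf{OP}$ let $\sigof{op}$ be the \emph{half-graph poset}: disjoint countable antichains $\set{l_i}_{i<\omega}$, $\set{r_j}_{j<\omega}$ with $l_i<r_j$ iff $i\le j$ and no other relations. If $T$ has $\mathsf{OP}$ via $\varphi(x,y)$ with $\varphi(a_i,b_j)\sse i\le j$, the two-sorted poset in which an $x$-copy sits below a $y$-copy exactly when $\varphi$ holds interprets a poset containing $\sigof{op}$; conversely, if $\monster$ interprets $P\supseteq\sigof{op}$ then $(x\le_P y)$, read at the images of the $l_i$ and $r_j$, is an $\mathsf{OP}$-configuration. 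For $\mathsf{IP}$ let $\sigof{ip}$ be the height-two incidence poset: top antichain $\set{\beta_i}_{i<\omega}$, bottom antichain $\set{\zeta_S}_{S\in[\omega]^{<\omega}}$, and $\zeta_S<\beta_i$ iff $i\in S$. If $\varphi(x,y)$ shatters $(t_i)_{i<\omega}$ with witnesses $z_S$, the bipartite poset induced by $\varphi$ interprets a poset containing $\sigof{ip}$ via $\beta_i\mapsto t_i$, $\zeta_S\mapsto z_S$; conversely, in $P\supseteq\sigof{ip}$ the images of the $\zeta_S$ shatter those of the $\beta_i$ for $(x\le_P y)$, since an embedding gives $f(\zeta_S)\le_P f(\beta_i)\sse i\in S$. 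In both cases distinctness of parameters is automatic from the configuration, and the posets have bounded chains, so no stronger property is forced.

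For the tree-type properties the order formula alone fails, because the ``inconsistent'' clauses ask for a \emph{lack} of common bounds, which an ambient poset can always supply. The remedy is the \emph{interval formula} $\chi(x;y_1,y_2)\coloneqq(y_1\le_P x\le_P y_2)$: since $\chi(x;a^-,a^+)\wedge\chi(x;b^-,b^+)$ forces $a^-\le b^+$, spreading endpoints apart --- an incomparability, hence absolute --- already yields inconsistency. Thus $\Sigma_{\mathsf{TP}_2}$ is a system of intervals $[\alpha^-_{ij},\alpha^+_{ij}]$ indexed by $\omega\times\omega$, with a point $\beta_g$ inside each transversal intersection $\bigcap_i[\alpha^-_{i,g(i)},\alpha^+_{i,g(i)}]$ ($g\in\omega^\omega$), arranged so that $\alpha^-_{ij}\not\le\alpha^+_{ij'}$ for $j\ne j'$; ``$\Rightarrow$'' realises the array and (by compactness) the transversal witnesses and reads off a finitely-sorted interpretable poset containing $\Sigma_{\mathsf{TP}_2}$, while ``$\Leftarrow$'' recovers a $\mathsf{TP}_2$-array for $\chi$ using the absoluteness of $\not\le$. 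Similarly $\Sigma_{\mathsf{TP}_1}$ and $\Sigma_{\mathsf{ATP}}$ are interval systems over $2^{<\omega}$ with a distinguished realiser in each branch, resp.\ each antichain, and with the intervals of incomparable nodes, resp.\ of comparable nodes, spread apart; here a useful point on the ``$\Rightarrow$'' side is that the inconsistency clauses already force a realiser of a branch (resp.\ antichain) type to falsify $\varphi$ on the remaining nodes, making the relevant injection an embedding. Finally $\Sigma_{\mathsf{SOP}_3}$ is built in the same spirit from the standard cyclic configuration of $\mathsf{SOP}_3$, after a preliminary normalisation by indiscernibles. In all cases one checks that the relational structure written down is genuinely a poset and is interpretable, with chains kept bounded.

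The main obstacle is these tree-type cases, and it has two faces. On the ``$\Rightarrow$'' side one must turn a raw combinatorial pattern into an honest definable partial order: this needs the usual normalisations --- passing from $k$-inconsistency to $2$-inconsistency for $\mathsf{TP}_2$, extracting strongly indiscernible trees or mutually indiscernible arrays, arranging pairwise-distinct parameters --- and then a check that the two- or three-sorted relation one defines is transitive and antisymmetric. On the ``$\Leftarrow$'' side the whole shape of the poset is forced by the demand that inconsistency survive passage to an \emph{arbitrary} ambient poset, which is why it must be engineered to follow from incomparability via the interval formula; getting this right while keeping the chain structure trivial --- so that $\mathsf{OP}$, $\mathsf{IP}$, $\mathsf{TP}_1$, $\mathsf{TP}_2$, $\mathsf{ATP}$ are not inadvertently strengthened to $\mathsf{SOP}$ --- is the delicate part. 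Among the six I expect $\mathsf{SOP}_3$ and $\mathsf{ATP}$ to require the most work, since their standard configurations are the least overtly tree-shaped, so most of the normalisation effort goes into recasting them into a form from which the interval encoding can be read off.
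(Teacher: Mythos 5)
Your proposal is correct in substance but reaches the theorem by a genuinely different route, most visibly for the four tree-type properties. Where the paper encodes each of $\mathsf{TP}_1$, $\mathsf{TP}_2$, $\mathsf{ATP}$, $\mathsf{SOP}_3$ as a \emph{maximal consistency pattern} and attaches to it a four-level poset $\alpha_C<\beta_j<\gamma_i<\delta_C$ --- inconsistency of a pair $\set{i,j}$ recorded by the \emph{positive} relation $\beta_j<\gamma_i$, and consistency of a set $C$ recovered by a compactness argument from the single negative relation $\alpha_C\centernot<\delta_C$ --- you use a three-level ``interval system'' in which consistency is witnessed by an actual element of the poset sitting in the relevant intersection of intervals, and inconsistency by the incomparability of endpoints, which any embedding reflects. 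Both encodings are robust for the same reason (embeddings preserve \emph{and} reflect $\le$), and your forward direction is also lighter: instead of compressing everything into a single formula ordered by inclusion (the paper's singleton/complement tricks and, for $\mathsf{IP}$, the almost disjoint family), you directly interpret a two- or three-sorted poset whose order relates distinct sorts only, so incomparability within a level is automatic; this proves the statement exactly as phrased, and yields the formula-level property via the Proposition of \Cref{sec:defs}. What the paper's abstraction buys is one verification covering all four patterns simultaneously, together with an explicit isolation of the hypothesis your argument uses implicitly in each case: that branches, antichains and paths may be taken \emph{maximal} (conditions \ref{point:pairsdecided}--\ref{point:allareinconsistentwithsome}), so that a realiser of a consistent set automatically falsifies $\phi$ off that set and the multi-sorted relation matches the intended poset. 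Two points in your sketch need more than a gesture: the pairwise distinctness of parameters (needed for injectivity of the embedding, easily arranged by tagging), and above all $\mathsf{SOP}_3$, where --- exactly as in the paper --- one must first replace the standard two-formula definition by an equivalent consistency pattern (Shelah's Claim~2.19 together with the converse from Ramsey's notes); ``built in the same spirit after a normalisation by indiscernibles'' does not yet supply that reduction.
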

The $\mathsf{OP}$ and $\mathsf{IP}$ parts are proven in \Cref{sec:opip}. The last four properties are treated in \Cref{sec:tps}. We do this uniformly by introducing the notion of a \emph{maximal consistency pattern}, building posets corresponding to properties that may defined by using such a pattern, and observing that  $\mathsf{TP}_1$, $\mathsf{TP}_2$, $\mathsf{ATP}$, and $\mathsf{SOP}_3$ all fall into this class. While, by~\cite{mutchnik}, $\mathsf{SOP}_1$ coincides with $\mathsf{TP}_1$, hence fits in our framework, it is still open whether $\mathsf{TP}$ may be treated in this fashion. This is the content of one of a handful of open questions featured in our concluding \Cref{sec:future}.

\section{Definitions}\label{sec:defs}
Fix a complete first-order $T$ and work in a monster model $\monster$. Variables and parameters are finite tuples unless otherwise stated. The length of a tuple $x$  is denoted by $\abs x$. We write e.g.\ $\models\phi(x)\implica \psi(x)$ with the meaning $\monster\models \forall x\; \phi(x)\implica \psi(x)$.
\begin{defin}\*\label{defin:sigmasop}
  \begin{enumerate}
  \item Let $\mathcal K$ be a class of finite posets. A partitioned formula
    $\psi(x;y)$ has $\sop {\mathcal K}$ iff for every $(\sigma,\le)\in {\mathcal K}$ there
    are $(d^s\mid s\in \sigma)$ in $\monster^{\abs y}$ such that $\models\psi(x; d^{s_0})\implica \psi(x; d^{s_1}) \sse \sigma\models s_0\le s_1$.
\item We say that $T$ has $\sop {\mathcal K}$ iff some partitioned formula has $\sop {\mathcal K}$.
\item If $(\Sigma, \le)$ is an arbitrary poset, we write $\sop \Sigma$ instead of $\sop{\age(\Sigma)}$.
\end{enumerate}
\end{defin}
\begin{rem}\*\label{rem:otherproperties}
  \begin{enumerate}
    \item The usual   $\mathsf{SOP}$ is the same as $\sop {\mathcal K}$ for ${\mathcal K}$ the class of finite linear orders, or as $\sop \Sigma$ for some (equivalently every) infinite linear order $\Sigma$.
  \item If $\mathcal K$ is the class of all finite posets then
    clearly having $\sop{\mathcal K}$ implies having all possible
    $\sop{\mathcal K'}$. We call this property the \emph{strict universal-order property}  $\mathsf{SUP}$. 
\item The class $\mathcal K$ of all finite posets is known to be Fra\"iss\'e. Its Fra\"iss\'e limit, the so-called generic poset, eliminates quantifiers in the language $\set{\le}$, hence it is $\mathsf{NIP}_2$ by~\cite[Proposition~6.5]{onndep}. Clearly, the generic poset has $\mathsf{SUP}$, hence no property of the form $\sop{\mathcal K}$ can imply $\mathsf{IP}_2$.
\end{enumerate}
\end{rem}

\begin{rem}
  All the properties under consideration are preserved by naming or forgetting constants. Therefore, we may freely assume to have enough $\emptyset$-definable elements, say $0,1,2$, etc. These elements will be used simply to perform case distinctions, by adding conjuncts such as $t=0$. If the reader prefers to work over $\emptyset$ instead, case distinctions can be done by increasing the number of parameter variables.
\end{rem}
The following proposition will be used repeatedly without mention. Recall that $f\from \Sigma\to \Sigma'$ is an embedding of posets iff $\Sigma\models s_0\le s_1\iff \Sigma'\models f(s_0)\le f(s_1)$.
\begin{pr} The following are equivalent for a first-order theory $T$.
  \begin{enumerate}
  \item  $T$ has $\sop \Sigma$.
  \item There are a partitioned formula     $\psi(x;y)$ and a $\Sigma$-sequence of tuples $(d^s\mid s\in \Sigma)$ in $\monster^{\abs y}$ such that    $\models\psi(x; d^{s_0})\implica \psi(x; d^{s_1}) \iff \Sigma\models s_0\le s_1$.
  \item $\monster$ interprets a poset $\Sigma'$ such that there is an embedding of posets $\Sigma\to \Sigma'$.
  \end{enumerate}
\end{pr}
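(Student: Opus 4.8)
The plan is to establish the cycle $(1)\implica(2)\implica(3)\implica(1)$; the implication $(2)\implica(1)$ is then automatic, and is in any case immediate, since for any finite $F\subseteq\Sigma$ the sub-poset $(F,{\le}\restriction F)$ induced on $F$ belongs to $\age(\Sigma)$, so a $\Sigma$-sequence as in $(2)$ restricts to a witness of $\sop{\age(\Sigma)}$. For $(1)\implica(2)$ I would argue by compactness with the formula fixed. Let $\psi(x;y)$ witness $\sop\Sigma=\sop{\age(\Sigma)}$, introduce variables $(y^s\mid s\in\Sigma)$ each of length $\abs y$, and let $\Gamma$ be the partial type containing $\forall x\,(\psi(x;y^{s_0})\implica\psi(x;y^{s_1}))$ whenever $\Sigma\models s_0\le s_1$ and $\exists x\,(\psi(x;y^{s_0})\wedge\neg\psi(x;y^{s_1}))$ whenever $\Sigma\not\models s_0\le s_1$. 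A finite subset of $\Gamma$ mentions only variables indexed by some finite $F\subseteq\Sigma$, and applying $\sop{\age(\Sigma)}$ to the poset $(F,{\le}\restriction F)\in\age(\Sigma)$ yields parameters $(d^s\mid s\in F)$ realising it; by compactness $\Gamma$ is realised in $\monster$, and the realisation is the required $\Sigma$-sequence.

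For $(2)\implica(3)$, given $\psi(x;y)$ and $(d^s\mid s\in\Sigma)$ as in $(2)$, consider the $\emptyset$-definable relation $R(y,y')\coloneqq\forall x\,(\psi(x;y)\implica\psi(x;y'))$ on $\monster^{\abs y}$, which is reflexive and transitive. Let $\Sigma'$ be the quotient of $\monster^{\abs y}$ by the $\emptyset$-definable equivalence relation $R(y,y')\wedge R(y',y)$, ordered by the relation induced by $R$; since $R$ is invariant under this equivalence relation, $\Sigma'$ is a poset interpreted in $\monster$. By the hypothesis on $(d^s)$ we get $\Sigma\models s_0\le s_1\iff R(d^{s_0},d^{s_1})\iff [d^{s_0}]\le_{\Sigma'}[d^{s_1}]$, so $s\mapsto[d^s]$ preserves and reflects the order; it is injective because $[d^{s_0}]=[d^{s_1}]$ forces $s_0\le s_1$ and $s_1\le s_0$, hence $s_0=s_1$. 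Thus it is an embedding $\Sigma\to\Sigma'$.

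For $(3)\implica(1)$, suppose $\monster$ interprets a poset $\Sigma'$ — say by means of a definable $D\subseteq\monster^n$, a definable equivalence relation $E$ on $D$, and a formula $\rho$ inducing the order $\preceq$ on $D/E$ — and fix an embedding $\iota\from\Sigma\to\Sigma'$. Parameters occurring in the interpretation may be absorbed into the parameter variable (or named as constants, the properties at hand being insensitive to that). Put $\psi(x;y)\coloneqq(x\in D)\wedge(y\in D)\wedge\rho(x,y)$, so that for $b_0,b_1\in D$ the statement $\models\psi(x;b_0)\implica\psi(x;b_1)$ asserts that the $\preceq$-down-set of $[b_0]$ is contained in that of $[b_1]$; taking $x\coloneqq b_0$ shows this is equivalent to $[b_0]\preceq[b_1]$, the reverse implication being transitivity. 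Given $\sigma\in\age(\Sigma)$, fix an embedding $\sigma\into\Sigma$ and compose with $\iota$ to obtain an embedding $g\from\sigma\to\Sigma'$; choosing $d^s\in D$ with $[d^s]=g(s)$, we get $\models\psi(x;d^{s_0})\implica\psi(x;d^{s_1})\iff g(s_0)\preceq g(s_1)\iff\sigma\models s_0\le s_1$. Hence $\psi$ witnesses $\sop{\age(\Sigma)}=\sop\Sigma$ and $T$ has $\sop\Sigma$.

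All three steps are routine, so there is no real obstacle; the only points demanding a little care are the bookkeeping in $(1)\implica(2)$ — where consistency of finite fragments of $\Gamma$ is exactly the content of the definition of $\age(\Sigma)$ — and keeping the order oriented correctly in the two explicit constructions, for which one must work with down-sets and with $R$ relating the smaller definable set to the larger: doing either backwards yields the opposite poset.
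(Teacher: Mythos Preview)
Your proof is correct and follows exactly the approach the paper sketches: compactness for $(1)\implica(2)$ and the definable equivalence relation $E(y_0,y_1)\coloneqq\forall x\;\psi(x;y_0)\coimplica\psi(x;y_1)$ for $(2)\implica(3)$, with the paper explicitly leaving the details to the reader. You have simply supplied those details, including the $(3)\implica(1)$ direction via down-sets in the interpreted poset, so there is nothing to add.
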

\begin{proof}
The proof is  straightforward, using compactness and the definable equivalence relation $E(y_0, y_1)\coloneqq \forall x\; \phi(x; y_0)\coimplica \phi(x;y_1)$. We leave the details to the reader.
\end{proof}

\section{Order and independence}\label{sec:opip}
In this section we show that the order property and the independence property may be phrased as $\sop {\Sigma}$ for suitable posets $\Sigma$.
\begin{defin}\label{defin:sigofop}
  We define $\sigof{op}$ to be the poset with domain    $\set{\alpha_i\mid i<\omega}\cup \set{\beta_j\mid j<\omega}$
  whose only relations are given by $\alpha_i< \beta_j\iff  i<j$.
\end{defin}
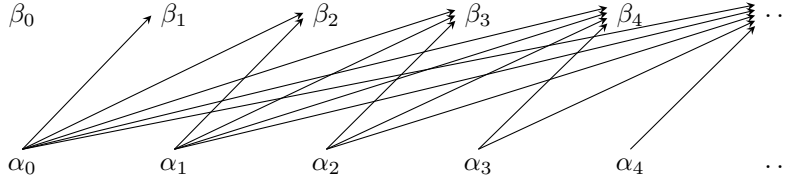
\begin{figure}[h]
  \begin{tikzpicture}[scale=1]
    \pgfmathsetmacro{\opnodnum}{6}
    \pgfmathsetmacro{\opnodnummu}{\opnodnum-1}        
    \foreach \i in{1,...,\opnodnummu}{\pgfmathparse{int(\i-1)}\pgfmathtruncatemacro{\imu}{\pgfmathresult}\node(a\i) at (2*\i,0) {$\alpha_{\imu}$};}
    \foreach \j in{1,...,\opnodnummu}{\pgfmathparse{int(\j-1)}\pgfmathtruncatemacro{\jmu}{\pgfmathresult}\node(b\j) at (2*\j,2) {$\beta_{\jmu}$};}
    \node (a\opnodnum) at (2*\opnodnum, 0) {\dots};
    \node (b\opnodnum) at (2*\opnodnum, 2) {\dots};
        \foreach \i in {1,...,\opnodnummu}{\pgfmathparse{int(\i+1)}\pgfmathsetmacro{\ipu}{\pgfmathresult}
          \foreach \j in{\ipu,...,\opnodnum}{
          \pgfmathparse{int(-\i*2+\j)}\pgfmathsetmacro{\arrsp}{\pgfmathresult}
          \draw[->, >=stealth] (a\i.north)--(
          [yshift=\arrsp pt] 
          b\j.west);}
          }
      \end{tikzpicture}\caption{A portion of the poset $\sigof{op}$ from \Cref{defin:sigofop}.  Arrows denote strict inequalities.}\label{figure:sigofop}
\end{figure}

 This poset is partially depicted in \Cref{figure:sigofop}. It has height $2$, with all the $\alpha_i$ and $\beta_0$ having height $0$, and all $\beta_j$ for $j>0$ having height $1$. Throughout this paper, we will refer to the set of elements of height $n$ in  $\Sigma$ as the \emph{$n$-th level} of $\Sigma$. 
\begin{thm}\label{thm:soppop}
  $T$ has \sopp{op} if and only if $T$ has $\mathsf{OP}$.
\end{thm}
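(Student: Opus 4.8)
The plan is to prove both directions by translating between the defining configuration of $\mathsf{OP}$ and a sequence indexed by $\sigof{op}$ witnessing $\sopp{op}$.

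\medskip

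\textbf{From $\sopp{op}$ to $\mathsf{OP}$.} Suppose $\psi(x;y)$ has $\sopp{op}$, so by the equivalent formulation in the preceding proposition there are tuples $(d^{\alpha_i}\mid i<\omega)$ and $(d^{\beta_j}\mid j<\omega)$ in $\monster$ with
\[
  \models \psi(x;d^{s_0})\implica\psi(x;d^{s_1}) \sse \sigof{op}\models s_0\le s_1.
\]
The only nontrivial instances are $\models\psi(x;d^{\alpha_i})\implica\psi(x;d^{\beta_j})$ exactly when $i<j$ (and the trivial reflexive ones). So the formula $\chi(x;y,y')\coloneqq \psi(x;y)\land\lnot\psi(x;y')$ — equivalently, thinking of it as a bipartite incidence — satisfies: for $i<j$ the set defined by $\psi(x;d^{\alpha_i})$ is contained in the set defined by $\psi(x;d^{\beta_j})$, while for $i\ge j$ it is not. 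Pick, for each pair $i\ge j$, a point $a_{i,j}\models\psi(x;d^{\alpha_i})\land\lnot\psi(x;d^{\beta_j})$. The cleanest route is to use the diagonal: consider the formula $\theta(y_\alpha; y_\beta)$ asserting $\exists x\,(\psi(x;y_\alpha)\land\lnot\psi(x;y_\beta))$; then $\models\lnot\theta(d^{\alpha_i};d^{\beta_j})\iff i<j$, i.e. $\theta(d^{\alpha_i};d^{\beta_j})\iff i\ge j$, which is a (reversed, hence up to swapping variables a genuine) half-graph / order configuration on $(d^{\alpha_i})_i$ and $(d^{\beta_j})_j$. A formula with an infinite half-graph has $\mathsf{OP}$ (this is the standard equivalence; if one prefers the linear-order formulation, apply Ramsey/compactness to extract from the half-graph a formula linearly ordering an infinite set), so $T$ has $\mathsf{OP}$.

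\medskip

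\textbf{From $\mathsf{OP}$ to $\sopp{op}$.} Conversely, assume $T$ has $\mathsf{OP}$. By the standard characterisation there is a formula $\phi(x;y)$ and tuples $(b_i\mid i<\omega)$, $(c_j\mid j<\omega)$ — or more simply a single sequence — realising a half-graph: $\models\phi(b_i;c_j)\iff i<j$ (after possibly naming the parameter split and reindexing; compactness and Ramsey give this from any instance of $\mathsf{OP}$). Now set $\psi(x;y,y')\coloneqq \phi(x;y)$ read with a dummy slot, or more to the point define the parameters indexed by $\sigof{op}$ as follows. For the $\alpha$-level use a formula in variables over the $b$-side; for the $\beta$-level use the $c$-side; concretely, take $\psi(x; w) \coloneqq \exists$-free combinations of $\phi$ so that $\psi(x;d^{\alpha_i})$ defines the ``down-set'' $\set{z \mid z \le i}$-flavoured set and $\psi(x;d^{\beta_j})$ the set $\set{z\mid z<j}$, realised inside the half-graph: put $d^{\alpha_i}=c_{i+1}$ and $d^{\beta_j}=c_j$ and $\psi(x;y)=\phi(x;y)$, so that $\psi(\monster;d^{\alpha_i})=\set{x\mid \phi(x;c_{i+1})}\supseteq\set{x\mid\phi(x;c_j)}=\psi(\monster;d^{\beta_j})$ exactly when the half-graph's down-sets nest appropriately, i.e. when $j\le i+1$; a small index shift then gives containment iff $i<j$, as required, and non-containment otherwise (witnessed by $b_k$ with $j\le k<i+1$... adjusting indices). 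One must check the $\alpha_i$ are mutually incomparable and the $\beta_j$ pairwise incomparable in the induced pattern; since the only forced implications among the $\psi(x;d^s)$ must come from actual set containments, and a generic half-graph produces no unwanted containments among same-side members (again by the genericity/indiscernibility arranged via Ramsey), this holds. Hence $\psi$ has $\sopp{op}$.

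\medskip

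\textbf{Main obstacle.} The delicate point is the direction $\mathsf{OP}\Rightarrow\sopp{op}$: one must produce a single partitioned formula $\psi(x;y)$ together with parameters realising \emph{exactly} the partial order $\sigof{op}$ — in particular forcing \emph{non}-comparability between any two $\alpha$'s, between any two $\beta$'s, and for $i\ge j$ between $\alpha_i$ and $\beta_j$, while forcing the implication for $i<j$. Arranging the positive containments is easy from nested definable sets; the work is ruling out all spurious implications, which is where one invokes an indiscernible (half-graph) sequence extracted by Ramsey and compactness, together with a careful bookkeeping of index shifts so that the strict half-graph $\models\phi(b_i;c_j)\iff i<j$ yields containment of the corresponding sets precisely along $i<j$. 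I expect the bulk of the proof to be this index-chasing, with the conceptual content — $\mathsf{OP}$ is exactly the existence of an infinite half-graph, which is exactly a copy of $\sigof{op}$'s comparability pattern among definable-set inclusions — being immediate.
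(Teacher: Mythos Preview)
Your left-to-right direction is fine and essentially matches the paper: your $\theta(y_\alpha;y_\beta)\coloneqq\exists x\,(\psi(x;y_\alpha)\land\lnot\psi(x;y_\beta))$ is the negation of the paper's $\phi(y;z)\coloneqq\forall x\,(\psi(x;y)\implica\psi(x;z))$, and either exhibits the half-graph.

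The right-to-left direction, however, has a genuine gap. You propose taking $\psi=\phi$ and setting $d^{\alpha_i}=c_{i+1}$, $d^{\beta_j}=c_j$, then appeal to ``genericity/indiscernibility'' to rule out unwanted containments among the $\psi(\monster;d^s)$. This fails already in the simplest example: in $\mathsf{DLO}$ with $\phi(x;y)\coloneqq x<y$, the half-graph is present, the parameter sequence is as indiscernible as one could wish, and yet the sets $\phi(\monster;c_j)=(-\infty,c_j)$ are \emph{totally nested}. Under your assignment all the $\psi(\monster;d^{\alpha_i})$ and $\psi(\monster;d^{\beta_j})$ lie on a single chain, so you get every unwanted implication at once: the $\alpha$'s are comparable to each other, the $\beta$'s are comparable to each other, and $\beta_j<\alpha_i$ holds whenever $j\le i$. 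No amount of Ramsey or index shifting repairs this, because the obstruction is the actual geometry of the definable sets, not a combinatorial accident.

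The paper's construction is therefore doing real work that your sketch omits. It uses a case-distinction parameter $t$ to make $\psi(x;d^{\alpha_i})$ a \emph{singleton} $\{a^i\}$ (forcing the $\alpha$'s to be pairwise incomparable for free), and, crucially, it extends the half-graph to a $\mathbb Z$-indexed family so that $\psi(x;d^{\beta_j})$ can be built as a set capturing $a^i$ exactly for $i\in(-\infty,-j)\cup[0,j)$. These sets are pairwise incomparable by design: for $j_0<j_1$ one has $a^{j_0}$ in the $\beta_{j_1}$-set but not the $\beta_{j_0}$-set, and $a^{-j_1}$ the other way round. The ``bulk of the proof'' you anticipated is not index-chasing on a fixed half-graph; it is engineering a formula whose instances realise the required antichains, and the half-graph alone does not provide that.
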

\begin{proof}
Left to right, suppose that $\psi(x; w)$ has $\sopp{op}$. Hence, there are  $(d^{\alpha_i})_{i\in \omega}$, and $(d^{\beta_j})_{j\in \omega}$ such that
\[
\models   \psi(x; d^{\alpha_i})\implica \psi(x; d^{\beta_j})\iff i<j
\]
From the above, it is immediate to observe that the partitioned formula  $\phi(y;z)\coloneqq \forall x\; \psi(x; y)\implica \psi(x; z)$ has $\mathsf{OP}$.

Right to left, let $\phi(x;y)$ have  $\mathsf{OP}$.  By padding, we may assume $\abs x=\abs y$. By compactness, there are tuples  $(a^i)_{i\in \mathbb Z}$ and $(b^j)_{j\in \mathbb Z}$ such that $\models\phi(a^i; b^j)\iff i<j$. Let $0,1$ be two distinct elements of $\monster^{\abs x}$ and define  the partitioned formula
\[
    \psi(x; y_0, y_1,y_2, t)\coloneqq (t=0\land x=y_0)\lor(t=1\land ((\neg \phi(x; y_0)\land \phi(x; y_1))\lor \phi(x; y_2))
  \]
  For ease of notation, instead of working with $\sopp{op}$ directly, we work with the isomorphic subposet with domain $\set{\alpha_i\mid 0<i<\omega}\cup \set{\beta_j\mid 0<j<\omega}$.
For $i>0$, set $d^{\alpha_i}\coloneqq(a^i, 0,0,0)$,  and for $j>0$ set $d^{\beta_j}\coloneqq(b^0, b^j, b^{-j},1)$. Observe the following.
\begin{itemize}
\item For every $i>0$, the formula $\psi(x;d^{\alpha_i})$ defines the singleton $\set{a^i}$. In particular, if $i_0\ne i_1$, then no inclusion holds between $\psi(x;d^{\alpha_{i_0}})$ and $\psi(x;d^{\alpha_{i_1}})$.
\item For every $j>0$ and $i\in \mathbb Z$, the formula $\psi(a^i;d^{\beta_j})$ holds if and only if $i\in (-\infty, -j)\cup [0,j)$. In particular,  if $j_0< j_1$, then $a^{j_0}\models\psi(x; d^{\beta_{j_1}})\land \neg \psi(x; d^{\beta_{j_0}})$, while $a^{-j_1}\models\psi(x; d^{\beta_{j_0}})\land \neg \psi(x; d^{\beta_{j_1}})$. It follows that, if $j_0\ne j_1$, then  no inclusion holds between $\psi(x;d^{\beta_{j_0}})$ and $\psi(x;d^{\beta_{j_1}})$.
 \end{itemize}
 It follows easily from the above that, if $i,j>0$, then  $\models\psi(x;d^{\alpha_i})\implica \psi(x;d^{\beta_j})$ if and only if $i<j$. From this, we conclude that $\psi(x; y_0,y_1,y_2,t)$,  has $\sopp{op}$.
\end{proof}

\begin{defin}
  We define $\sigof{ip}$ to be the poset with domain
  \[
    \set{\alpha_i\mid i<\omega}\cup \set{\beta_W\mid W\subseteq \omega}
  \]
  whose only relations are given by $\alpha_i< \beta_W\iff  i\in W$.
\end{defin}
Observe this poset again has two levels, the $0$-th one given by the $\alpha_i$ and $\beta_\emptyset$, and the other one containing all the other $\beta_W$.
\begin{thm}
  $T$ has \sopp{ip} if and only if $T$ has $\mathsf{IP}$.
\end{thm}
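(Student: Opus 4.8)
The plan is to mirror the proof of \Cref{thm:soppop}, treating the two implications separately: the left-to-right one is immediate, while the right-to-left one carries essentially all of the work.

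For left to right, suppose $T$ has $\sopp{ip}$. By the proposition in \Cref{sec:defs} there are a partitioned formula $\psi(x;y)$ and a $\sigof{ip}$-sequence, which we split as $(d^{\alpha_i})_{i<\omega}$ and $(d^{\beta_W})_{W\subseteq\omega}$, such that $\models\psi(x;d^{\alpha_i})\implica\psi(x;d^{\beta_W})$ holds if and only if $i\in W$. Exactly as in the proof of \Cref{thm:soppop}, the partitioned formula $\phi(y;z)\coloneqq\forall x\;\psi(x;y)\implica\psi(x;z)$ then satisfies $\models\phi(d^{\alpha_i};d^{\beta_W})$ if and only if $i\in W$, and this is precisely a configuration witnessing that $\phi(y;z)$ has $\mathsf{IP}$, with $(d^{\alpha_i})_{i<\omega}$ the shattered tuples and $(d^{\beta_W})_{W\subseteq\omega}$ the corresponding codes.

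For right to left, let $\phi(x;y)$ have $\mathsf{IP}$ and, by padding, assume $\abs x=\abs y$. Unwinding the definition of $\mathsf{IP}$ and applying compactness, we obtain $(p^i)_{i<\omega}$ and $(q^W)_{W\subseteq\omega}$ with $\models\phi(p^i;q^W)\iff i\in W$; note that the $p^i$ are pairwise distinct, since otherwise $W=\set{i}$ would give a contradiction. By the proposition in \Cref{sec:defs} it is enough to exhibit a partitioned formula and a $\sigof{ip}$-sequence for it. Using $\emptyset$-definable elements $0\ne1$, and writing $x$ for the pair $(x_1,x_2)$, I would take
\[
  \psi(x_1,x_2;\,y_1,t)\coloneqq\bigl(t=0\land x_1=y_1\land x_2=0\bigr)\lor\bigl(t=1\land\bigl((x_2=0\land\phi(x_1;y_1))\lor(x_2=1\land\neg\phi(x_1;y_1))\bigr)\bigr),
\]
and set $d^{\alpha_i}\coloneqq(p^i,0)$, $d^{\beta_W}\coloneqq(q^W,1)$. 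Then $\psi(x;d^{\alpha_i})$ defines the singleton $\set{(p^i,0)}$, while $\psi(x;d^{\beta_W})$ defines the set $S_W\coloneqq\set{(x_1,0):\phi(x_1;q^W)}\cup\set{(x_1,1):\neg\phi(x_1;q^W)}$, which contains $(p^i,0)$ iff $i\in W$, contains $(p^i,1)$ iff $i\notin W$, and is infinite because one of $W,\omega\setminus W$ is.

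The step I expect to be the main obstacle is checking that the only inclusions among the sets $\psi(x;d^s)$, for $s$ ranging over $\sigof{ip}$, are the trivial reflexive ones together with ``$\psi(x;d^{\alpha_i})\implica\psi(x;d^{\beta_W})$ exactly when $i\in W$'', and in particular that the sets $S_W$ are pairwise incomparable. This is exactly what the extra ``shadow'' coordinate $x_2$ is needed for: were $S_W$ simply $\set{x:\phi(x;q^W)}$ one could only rule out $S_{W_0}\subseteq S_{W_1}$ when $W_0\setminus W_1\ne\emptyset$, and nothing would prevent $S_{W_1}\subseteq S_{W_0}$ in case $W_1\subsetneq W_0$; with the definition above, instead, given $W_0\ne W_1$ one picks $i$ in their symmetric difference and observes that $(p^i,0)$ lies in exactly one of $S_{W_0},S_{W_1}$ while $(p^i,1)$ lies in exactly the other, so neither is contained in the other. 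The remaining verifications --- that the $\alpha$-singletons $\set{(p^i,0)}$ are pairwise distinct (hence pairwise non-included), and that no infinite $S_W$ is contained in a singleton --- are routine, and assembling them shows that $\psi$ has $\sopp{ip}$.
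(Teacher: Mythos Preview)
Your proof is correct. The left-to-right direction is identical to the paper's. For right to left, however, you take a genuinely different and somewhat slicker route. The paper keeps the simpler formula $\psi(x;y,t)\coloneqq(t=0\land x=y)\lor(t=1\land\phi(x;y))$, so that $\psi(x;d^{\beta_W})$ defines $\phi(\monster;b^W)$; to force pairwise $\subseteq$-incomparability of these sets it then restricts the $W$'s to an infinite almost disjoint family $\mathcal W\subseteq\mathscr P(\omega)$, and must finish with a short argument showing that every finite bipartite membership digraph between $n$ and $\mathscr P(n)$ embeds into the one between $\omega$ and $\mathcal W$. Your ``shadow coordinate'' $x_2$ replaces this detour entirely: by making $\psi(x;d^{\beta_W})$ encode the \emph{graph} of the Boolean function $x_1\mapsto\phi(x_1;q^W)$ rather than just its support, any $i$ in the symmetric difference of $W_0$ and $W_1$ yields both a witness $(p^i,0)$ and a counter-witness $(p^i,1)$, so incomparability of $S_{W_0}$ and $S_{W_1}$ is automatic and you can use all $W\subseteq\omega$ at once, producing a full $\sigof{ip}$-sequence directly via point~2 of the proposition. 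The paper's approach has the minor advantage of not enlarging the $x$-sort, while yours avoids both the almost-disjoint-family trick and the concluding embedding check.
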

\begin{proof}
  Left to right, suppose that $\psi(x; w)$ has $\sopp{ip}$. Hence, there are  $(d^{\alpha_i})_{i< \omega}$, and $(d^{\beta_W})_{W\subseteq \omega}$ such that, in particular, 
\[
\models  \psi(x; d^{\alpha_i})\implica \psi(x;d^{\beta_W} )\iff i\in W
\]
From this it is immediate that $\phi(y;z)\coloneqq \forall x\; \psi(x; y)\implica \psi(x; z)$ has $\mathsf{IP}$.

Right to left, let $\phi(x;y)$ have  $\mathsf{IP}$. By padding, we may assume $\abs x=\abs y$. Fix tuples  $(a^i)_{i\in \omega}$ and $(b^W)_{W\subseteq \omega}$ such that $\models\phi(a^i; b^W)\iff i\in W$. Let $0,1$ be two distinct elements of $\monster^{\abs x}$  and define the partitioned formula
\[
    \psi(x; y, t)\coloneqq (t=0\land x=y)\lor(t=1\land \phi(x; y))
  \]
For $i\in \omega$, set $d^{\alpha_i}\coloneqq(a^i, 0)$. Fix now an infinite almost disjoint family\footnote{Recall that an \emph{almost disjoint family} is a family $\mathcal W$ of infinite sets such that for every distinct $W_0, W_1\in \mathcal W$ the intersection $W_0\cap W_1$ is finite.} $\mathcal W$ of subsets of $\omega$ and, for $W\in \mathcal W$, set $d^{\beta_W}\coloneqq(b^W, 1)$.   Observe the following.
\begin{itemize}
\item For every $i\in \omega$, the formula $\psi(x;d^{\alpha_i})$ defines the singleton $\set{a^i}$. In particular, if $i_0\ne i_1$, then no inclusion holds between $\psi(x;d^{\alpha_{i_0}})$ and $\psi(x;d^{\alpha_{i_1}})$.
\item Because the family $\mathcal W$ is almost disjoint,  no inclusion holds between distinct elements of $\mathcal W$. It follows that, for $W_0\ne W_1\in \mathcal W$, no inclusion holds between $\psi(x; d^{\beta_{W_0}})$ and $\psi(x; d^{\beta_{W_1}})$, since if $i\in W_{k}\setminus W_{1-k}$, for $k\in \set{0,1}$, then $a^i\models \phi(x; d^{\beta_{W_k}})\land \neg \phi(x; d^{\beta_{W_{1-k}}})$.
\item Clearly, $\models \psi(x; d^{\alpha_i})\implica \psi(x; d^{\beta_W})\iff i\in W$.
\end{itemize}
To conclude that $\psi(x; y,t)$ has $\sigof{ip}$ we just need to show that, for every $n$, the bipartite membership digraph between $n$ and its powerset embeds in the membership digraph between $\omega$ and an infinite almost disjoint family $\mathcal W$.  To this end, take an almost disjoint family $\mathcal W_n$ of subsets of $\omega\setminus n$, and let $W_0,\ldots, W_{2^n-1}$ be pairwise distinct elements of $\mathcal W_n$. Now fix a bijection $f\from 2^n\to \mathscr P(n)$, and let $\mathcal W$ be obtained from $\mathcal W_n$ by replacing $W_i$ with $W_i\cup f(i)$.
\end{proof}

\section{Some tree properties}\label{sec:tps}
Here we deal with the cases of $\mathsf{TP}_1$, $\mathsf{TP}_2$, $\mathsf{ATP}$, and $\mathsf{SOP}_3$. We subsume these properties under the more general notion of a \emph{tree property} defined via a \emph{maximal consistency pattern}.
\begin{defin}\*
  \begin{enumerate}
  \item A \emph{consistency pattern} $\mathcal P$ is a triple $\mathcal P=(J, \mathcal I, \mathcal C)$ such that $J$ is a set, $\mathcal I\subseteq J^{[2]}$ a family of subsets of $J$ of size $2$ (called the \emph{inconsistent pairs}) and  $\mathcal C\subseteq \mathscr P(J)$ a family of subsets of $J$ (called the \emph{consistent sets}), satisfying the following.    \begin{enumerate}[label=(C\arabic*)]
      \item None of $J, \mathcal I, \mathcal C$ is empty.
      \item  There are no $I\in \mathcal I$ and $C\in \mathcal C$ such that  $I\subseteq C$ or $C\subseteq I$.
    \end{enumerate}
    \item A consistency pattern is \emph{maximal} iff the following  conditions are satisfied.
      \begin{enumerate}[label=(M\arabic*)]
      \item\label{point:pairsdecided} For every $\set{i,j}\in J^{[2]}\setminus \mathcal I$ there
        is $C\in \mathcal C$ such that $\set{i,j}\subseteq C$. In
        other words, every pair of distinct indices is either
        inconsistent or contained in a consistent set.

      \item\label{point:cmaximal}   For every $C\in \mathcal C$ and every $j\in J\setminus C$ there is $i\in C$ such that $\set{i,j}\in \mathcal I$. In other words, consistent sets are maximal.
      \item\label{point:allareinconsistentwithsome}  For every $j\in J$ there are at least two $i\in J$ such that $\set{i,j}\in \mathcal I$.
\end{enumerate}

  \item A \emph{tree property} $\mathsf{P}$ is defined by fixing a consistency pattern $\mathcal P=(J, \mathcal I, \mathcal C)$ and saying that a partitioned formula $\phi(x;y)$ has
    $\mathsf{P}$ iff there is a $J$-sequence of parameters $(b^j\mid j\in J)$ in $\monster^{\abs y}$ such that
\begin{enumerate}
\item for every $C\in \mathcal C$, the set of formulas $\set{\phi(x; b^j)\mid j\in C}$ is consistent, and
\item for every $I\in \mathcal I$, the set of formulas $\set{\phi(x; b^j)\mid j\in I}$ is inconsistent.
\end{enumerate}
  A theory has $\mathsf{P}$ iff some partitioned formula has $\mathsf{P}$.
\end{enumerate}
\end{defin}
\begin{rem}\label{rem:weakmax}
Condition~\ref{point:cmaximal} implies that for every $C\ne C'\in \mathcal C$ there are $i\in C$ and $j\in  C'$ such that $\set{i,j}\in \mathcal I$. In other words, any two consistent sets differ on a pair of inconsistent indices. 
\end{rem}
\begin{eg}The properties $\mathsf{TP}_1$, $\mathsf{TP}_2$, and $\mathsf{ATP}$ can all be defined via maximal consistency patterns, with index sets respectively $\omega^{<\omega}\setminus\set{\seq{}}$, $\omega^2$, and $2^{<\omega}\setminus\set{\seq{}}$ (for $\mathsf{ATP}$, note that it can be equivalently defined using \emph{maximal} antichains). Note that $\mathsf{TP}_1$ may also be defined as $\mathsf{SOP}_2$, with index set $2^{<\omega}\setminus\set{\seq{}}$, and $\mathsf{TP}_2$ may also be defined with index set $\omega^{<\omega}$ (this is folklore, the idea is to  make every row of the matrix into the set of siblings of a node).
\end{eg}
\begin{eg}\label{eg:tpnonmax}
   The usual tree property $\mathsf{TP}$ can be defined in this fashion by taking $J={\omega^{<\omega}}\setminus \set{\seq{}}$, 
    while as $\mathcal C$ we take the family of branches of $J$, and as
    $\mathcal I$ the family of pairs of siblings. In other words,
    $I\in J^{[2]}$ is in $\mathcal I$ if and only if there  $\sigma\in \omega^{<\omega}$ such that both elements of $I$ are of the form    $\sigma\cat i$ for some $i\in \omega$. The removal of the root ensures that \ref{point:allareinconsistentwithsome} is satisfied, but   the consistency pattern above does not satisfy \ref{point:pairsdecided} nor  \ref{point:cmaximal}, hence it is not maximal (although it still satisfies the property in \Cref{rem:weakmax}). 
\end{eg}
\begin{rem}\label{rem:sop1sop2}
  The same remarks made in \Cref{eg:tpnonmax} also apply to the usual definition of $\mathsf{SOP}_1$. Nevertheless, by~\cite{mutchnik}, $\mathsf{SOP}_1$ equals $\mathsf{SOP}_2$, equivalently $\mathsf{TP}_1$. Therefore, it may happen that the same tree property may be defined via a maximal consistency pattern and via a non-maximal one.
\end{rem}

\begin{eg}\label{eg:sop3}
The property  $\mathsf{SOP}_3$ can be defined through the following maximal consistency pattern, partially depicted in \Cref{figure:sop3pat}.   Set $J\coloneqq \mathbb Z\times\set{0,1}$, take $\mathcal I=\set{\set{(i,0), (j,1)}\mid i\ge j}$ and $\mathcal C\coloneqq \set{C_k\mid k\in \mathbb Z}$, where $C_k\coloneqq\set{(i,0)\mid i<k}\cup \set{(j,1)\mid j\ge k}$.  Maximality is easily checked. The fact that  $\mathsf{SOP}_3$ implies the tree property given by this pattern follows from \cite[Claim~2.19]{shelah_toward_1996}, compactness, and case distinctions such as replacing $\phi(x;y)$ and $\psi(x;y)$ by $\theta(x;yt)\coloneqq ((t=0\land\phi(x;y))\lor (t=1\land \psi(x;y)))$. The other direction relies on the fact that in point (b) in \cite[Claim~2.19]{shelah_toward_1996} the inconsistency of $\set{\phi(x;y), \psi(x;y)}$ is superfluous, see e.g.~\cite[Lemma~5.5]{ramseynotes}.
\end{eg}
\begin{figure}[h]
  \begin{tikzpicture}[scale=1]
    \pgfmathsetmacro{\opnodnum}{7}
    \pgfmathtruncatemacro{\opnodnummu}{\opnodnum-1}        
    \foreach \i in{1,...,\opnodnummu}{\pgfmathparse{int(\i-2)}\pgfmathtruncatemacro{\imu}{\pgfmathresult}\node(a\i) at (2*\i,0) {$(\imu,0)$};}
    \foreach \j in{1,...,\opnodnummu}{\pgfmathparse{int(\j-2)}\pgfmathtruncatemacro{\jmu}{\pgfmathresult}\node(b\j) at (2*\j,2) {$(\jmu,1)$};}
    \node (a0) at (0, 0) {\dots};
    \node (b0) at (0, 2) {\dots};
    \node (a\opnodnum) at (2*\opnodnum, 0) {\dots};
    \node (b\opnodnum) at (2*\opnodnum, 2) {\dots};

    \draw plot [smooth, tension=0.3] coordinates 
    {([yshift=5]a0.north west) (a2.north east) ([xshift=25]b2.north east) ([yshift=4]b\opnodnum.north  east)};
    \draw plot [smooth, tension=0.6] coordinates 
    {([yshift=-4]a0.south west) ([xshift=10, yshift=-3]a2.south east)  ([xshift=0, yshift=-2]b3.south)  ([yshift=-5]b\opnodnum.south east)};

    \draw [dashed] plot [smooth cycle, tension=1] coordinates 
    {(a3.east)([yshift=3]b2.east)(b2.west)([yshift=-3]a3.west)};
      \end{tikzpicture}\caption{A portion of the pattern defining $\mathsf{SOP}_3$ from \Cref{eg:sop3}. The solid lines enclose a consistent set, the dashed line encloses an inconsistent pair.}\label{figure:sop3pat}
\end{figure}
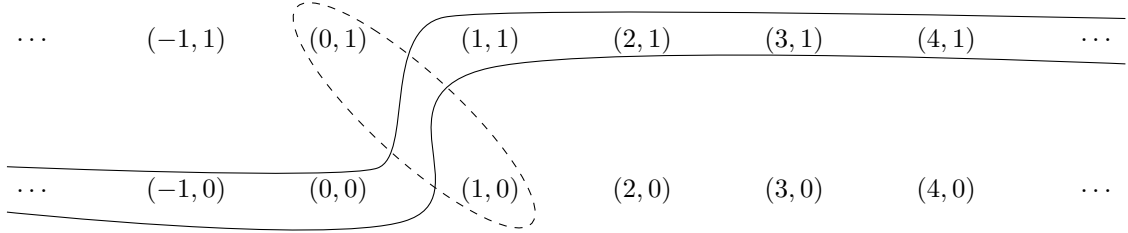
\begin{rem}\label{rem:negcons}
  If $T$ has a tree property $\mathsf{P}$, then this can be witnessed by a partitioned formula $\phi(x;y)$ and a $J$-sequence $(b^j\mid j\in J)$ in $\monster^{\abs y}$ such that  $\set{\neg\phi(x; b^j)\mid j\in J}$ is consistent and has infinitely many realisations: 
it is enough to artificially enlarge the sort of $x$, for instance by replacing $x$ with $xt$ and changing $\phi(x;y)$ to $\phi(x;y)\land t=0$, for a fixed element $0$.
\end{rem}

\begin{defin}\label{defin:S}
Let $\mathcal P=(J,\mathcal C,\mathcal I)$ be a maximal consistency pattern. Define a set $\sigof{\mathcal P}$ as follows.
  \begin{enumerate}
  \item For every $j\in J$, there are two points $\beta_j$ and $\gamma_j$.
  \item For every $C\in \mathcal C$, there are two points $\alpha_C$ and $\delta_C$.
  \item $\Sigma$ has no other point.
  \end{enumerate}
  Now, we define a binary relation $<$ on $\Sigma$ as follows.
  \begin{enumerate}[resume]
  \item \label{point:alb} Whenever $i\in C\in \mathcal C$, we have $\alpha_C<\beta_i$.
  \item \label{point:blc}Whenever $i\in C\in \mathcal C$, we have $\gamma_i<\delta_C$.
  \item \label{point:cld}Whenever $\set{i,j}\in \mathcal I$, we have $\beta_i< \gamma_j$ (and $\beta_j< \gamma_i$).
  \item Let $<$ be the transitive closure of the relations above.
  \end{enumerate}
\end{defin}
\begin{figure}[t]
  \begin{tikzpicture}[scale=1]
    \node (bi1) at (-4,2) {$\beta_{i_1}$};
    \node (ci1) at (-4,4) {$\gamma_{i_1}$};
    
    \node (ac) at (-3,1) {$\alpha_C$};
    \node (bi0) at (-2,2) {$\beta_{i_0}$};
    \node (ci0) at (-2,4) {$\gamma_{i_0}$};
    \node (dc) at (-3,5) {$\delta_C$};
    
    \node (ad) at (3,1) {$\alpha_D$};
    \node (bj0) at (2,2) {$\beta_{j_0}$};
    \node (cj0) at (2,4) {$\gamma_{j_0}$};
    \node (dd) at (3,5) {$\delta_D$};
    
    \node (bj1) at (4,2) {$\beta_{j_1}$};
    \node (cj1) at (4,4) {$\gamma_{j_1}$};

    \draw[->, >=stealth] (ac)--(bi0);
    \draw[->, >=stealth] (ac)--(bi1);
    \draw[->, >=stealth] (ci0)--(dc);
    \draw[->, >=stealth] (ci1)--(dc);

    \draw[->, >=stealth] (ad)--(bj0);
    \draw[->, >=stealth] (ad)--(bj1);
    \draw[->, >=stealth] (cj0)--(dd);
    \draw[->, >=stealth] (cj1)--(dd);

    \draw[->, >=stealth] (bi1)--(cj0.west);
    \draw[->, >=stealth] (bi1)--(cj1);
    \draw[->, >=stealth] (bi0)--(cj0);
    \draw[->, >=stealth] (bi0)--(cj1.south west);

    \draw[->, >=stealth] (bj1)--(ci0.east);
    \draw[->, >=stealth] (bj0)--(ci0);
    \draw[->, >=stealth] (bj1)--(ci1);
    \draw[->, >=stealth] (bj0)--(ci1.south east);
\end{tikzpicture}\caption{Part of the poset $\Sigma_{\mathcal P}$ in \Cref{defin:S}, assuming, for $k\in \set{0,1}$, that  $i_k\in C\in \mathcal C$, $j_k\in D\in \mathcal C$, and $\set{i_k,j_k}, \set{i_k, j_{k-1}}\in \mathcal I$. Arrows denote only immediate strict inequalities; those arising from transitivity are not represented.}\label{figure:sigPcot}
\end{figure}
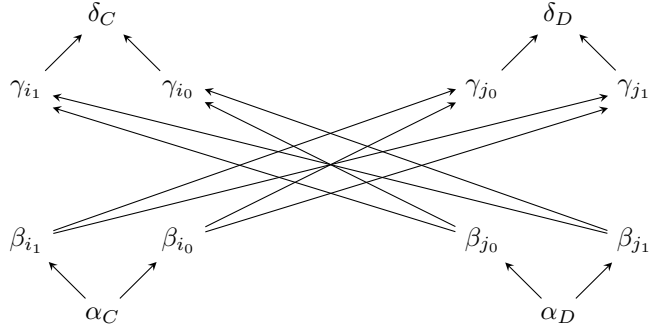
A part of the poset we just defined is pictured in \Cref{figure:sigPcot}.
\begin{pr}
If $\mathcal P$ is a maximal consistency pattern, then the structure $(\sigof{\mathcal P},<)$ is a strict partial order which satisfies the following properties.
\begin{enumerate}[label=(P\arabic*)]
\item\label{point:correctheights}  All $\alpha_C$ have height $0$, all $\beta_j$ have height $1$, all $\gamma_j$ have height $2$, and all $\delta_C$ have height $3$.
\item \label{point:Cu}   For all $C\ne C'\in \mathcal C$ we have $\alpha_C<\delta_{C'}$ and   $\alpha_C\centernot <\delta_C$.
\item \label{point:distance2}The following are equivalent:
\begin{enumerate*}[label=(\roman*)]
\item $j\in C$
\item $\alpha_C<\beta_j$ 
\item $\alpha_C\centernot< \gamma_j$
\item $\gamma_j<\delta_C$
\item $\beta_j\centernot <\delta_C$.
\end{enumerate*}
\end{enumerate}
\end{pr}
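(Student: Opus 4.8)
The plan is to establish the four assertions in turn; the only one needing real thought is the height computation \ref{point:correctheights}. Throughout I would use silently that the point families $\alpha_C$, $\beta_j$, $\gamma_j$, $\delta_C$ are pairwise disjoint. The main device is a \emph{level function} $\ell$ on the generators, with $\ell(\alpha_C)=0$, $\ell(\beta_j)=1$, $\ell(\gamma_j)=2$, $\ell(\delta_C)=3$: each of the defining clauses \ref{point:alb}, \ref{point:blc}, \ref{point:cld} relates a point to one of level exactly one greater, so in the transitive closure $x<y$ forces $\ell(x)<\ell(y)$, and in fact any $<$-chain from $x$ to $y$ breaks up into exactly $\ell(y)-\ell(x)$ single steps, each an instance of one of those clauses. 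This makes $<$ irreflexive, hence --- transitive by construction --- a strict partial order of height at most $3$, and also delivers the upper bounds in \ref{point:correctheights}: $\alpha_C$ is a source and so has height $0$, and $\beta_j$, $\gamma_j$, $\delta_C$ have height at most $1$, $2$, $3$ respectively.

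For the matching lower bounds in \ref{point:correctheights} I would build, through each vertex, a chain of the claimed length. This rests on three facts: (i) every $C\in\mathcal C$ is non-empty, since otherwise $C\subseteq I$ for any $I\in\mathcal I$, against (C2) as $\mathcal I$ is non-empty; (ii) by \ref{point:allareinconsistentwithsome} every index has an inconsistent partner; and (iii) every index of $J$ lies in some member of $\mathcal C$. Granting these, below $\delta_C$ I would take $i'\in C$ by (i), then $i$ with $\set{i,i'}\in\mathcal I$ by (ii), then $C'\ni i$ by (iii), getting $\alpha_{C'}<\beta_i<\gamma_{i'}<\delta_C$; truncating gives a length-$2$ chain below $\gamma_j$ and a length-$1$ chain below $\beta_j$ (the latter needing only that $j$ itself is in a consistent set). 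I expect (iii) to be the main obstacle: it is exactly where \ref{point:pairsdecided}, \ref{point:cmaximal}, \ref{point:allareinconsistentwithsome} must be used in concert --- through \ref{point:pairsdecided} it reduces to saying that no index is inconsistent with every other one --- while for each of the concrete patterns of interest it is transparent, every index lying on a branch, in a row, in a maximal antichain, or in some $C_k$.

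Finally, \ref{point:Cu} and \ref{point:distance2} should be routine, the tools being the level function and (C2). For \ref{point:Cu}: if $C\neq C'$ then \Cref{rem:weakmax} supplies $i\in C$, $j\in C'$ with $\set{i,j}\in\mathcal I$, whence $\alpha_C<\beta_i<\gamma_j<\delta_{C'}$; and $\alpha_C\centernot< \delta_C$, since any chain realising $\alpha_C<\delta_C$ would be of the form $\alpha_C<\beta_i<\gamma_{i'}<\delta_C$ with $i,i'\in C$ and $\set{i,i'}\in\mathcal I$, contradicting (C2). For \ref{point:distance2}: (i)$\Rightarrow$(ii) and (i)$\Rightarrow$(iv) are the clauses \ref{point:alb} and \ref{point:blc}, and their converses hold because \ref{point:alb} is the only defining clause whose right-hand point is a $\beta$ and \ref{point:blc} the only one whose right-hand point is a $\delta$; and if $j\in C$ then a chain $\alpha_C<\beta_i<\gamma_j$ or $\beta_j<\gamma_i<\delta_C$ would place an inconsistent pair inside $C$, against (C2), giving (i)$\Rightarrow$(iii) and (i)$\Rightarrow$(v), whereas if $j\notin C$ then \ref{point:cmaximal} provides $i\in C$ with $\set{i,j}\in\mathcal I$, and the chains $\alpha_C<\beta_i<\gamma_j$ and $\beta_j<\gamma_i<\delta_C$ witness $\alpha_C<\gamma_j$ and $\beta_j<\delta_C$, i.e.\ the failures of (iii) and (v). Assembling these implications then closes \ref{point:distance2}.
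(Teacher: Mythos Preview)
Your overall architecture is sound, and the level-function device is a slightly different route from the paper's: the paper instead computes the transitive closure explicitly in two stages, showing that $<$ equals $R_0\cup\set{(\alpha_C,\gamma_j),(\beta_j,\delta_C)\mid j\notin C}\cup\set{(\alpha_C,\delta_{C'})\mid C\ne C'}$, and then reads off \ref{point:correctheights}--\ref{point:distance2} from this description. Your treatments of irreflexivity, \ref{point:Cu}, and \ref{point:distance2} are correct and essentially match what one extracts from the paper's explicit description.

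The gap you single out at (iii) is real, and it cannot be closed: (iii) does \emph{not} follow from (C1), (C2), \ref{point:pairsdecided}--\ref{point:allareinconsistentwithsome}. Take $J=\set{a,b,c,d}$, let $\mathcal I$ consist of every $2$-subset of $J$ except $\set{b,c}$, and put $\mathcal C=\set{\set{b,c}}$. One checks directly that this is a maximal consistency pattern, yet $a$ lies in no member of $\mathcal C$; consequently $\beta_a$ has no predecessor and hence height $0$, so \ref{point:correctheights} fails as literally stated. The paper's proof passes over this point (``easily checked by using the description above''), so the defect is in the proposition itself rather than in your strategy. Your reduction via \ref{point:pairsdecided} is exactly right: (iii) is equivalent to ``no index is inconsistent with every other index'', and this is the missing hypothesis. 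For the concrete patterns actually used later ($\mathsf{TP}_1$, $\mathsf{TP}_2$, $\mathsf{ATP}$, $\mathsf{SOP}_3$) this is immediate, as you note, and with it your lower-bound chains go through and \ref{point:correctheights} holds.
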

\begin{proof}
Let $R_0$ be the binary relation on $\Sigma_{\mathcal P}$ specified by points \ref{point:alb}, \ref{point:blc} and \ref{point:cld} of \Cref{defin:S}. Define $R_1\coloneqq R_0\cup (R_0\circ R_0)$. Notice that $(\alpha_C, \gamma_j)\in R_1$ if and only if there is $\set{i,j}\in \mathcal I$ such that $i\in \mathcal C$, if and only if $(\beta_j, \delta_C)\in R_1$. Hence,
by \ref{point:cmaximal}, we have  $R_1= R_0\cup\set{(\alpha_C, \gamma_j), (\beta_j, \delta_C)\mid j\notin C}$. Moreover, by \Cref{rem:weakmax}, we have that $R_2\coloneqq R_1\cup (R_1\circ R_1)$ equals $R_1\cup \set{(\alpha_C, \delta_{C'})\mid C\ne C'\in \mathcal C}$. Using that no $\delta_C$ ever appears as a first coordinate in the ordered pairs in $R_2$, it is easily checked that $R_2$ is transitive, hence coincides with the transitive closure of $R_0$, i.e.\ with $<$. This relation is transitive and irreflexive, that is, a strict partial order, and the three properties \ref{point:correctheights}, \ref{point:Cu}, \ref{point:distance2} are easily checked by using the description above.
\end{proof}
\begin{thm}
  Let $\mathsf{P}$ be a tree property defined via a maximal consistency pattern $\mathcal P$. Then $T$ has $\mathsf{P}$ if and only if $T$ has $\sopp{\mathcal P}$. 
\end{thm}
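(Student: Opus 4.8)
The plan is to convert, in both directions, between a tree-property witness for $\mathsf P$ --- a partitioned formula $\phi(x;y)$ and a $J$-sequence $(b^j)$ realising the prescribed consistency and inconsistency patterns --- and a witness for $\sopp{\mathcal P}$, which by the equivalence established in \Cref{sec:defs} may be taken to consist of a formula $\psi(x;y)$ together with a $\sigof{\mathcal P}$-indexed sequence $(d^s)$ such that $X_s\subseteq X_{s'}\iff s\le s'$ in $\sigof{\mathcal P}$, where $X_s\coloneqq\psi(\monster;d^s)$. Throughout I use the explicit description of the strict order of $\sigof{\mathcal P}$ recorded above, in particular properties \ref{point:correctheights}--\ref{point:distance2}.

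For the implication from $\sopp{\mathcal P}$ to $\mathsf P$ --- the shorter one, once the right witnesses are chosen --- fix $\psi$ and $(d^s)$ as above. Since $\alpha_C\centernot<\delta_C$ by \ref{point:Cu}, the set $X_{\alpha_C}\setminus X_{\delta_C}$ is nonempty; choose $e_C$ in it, and put $\phi(x;b^j)\coloneqq\psi(x;d^{\beta_j})\land\neg\psi(x;d^{\gamma_j})$ with $b^j\coloneqq(d^{\beta_j},d^{\gamma_j})$. If $j\in C$, then $\alpha_C<\beta_j$ gives $e_C\in X_{\alpha_C}\subseteq X_{\beta_j}$, while $\gamma_j<\delta_C$ gives $X_{\gamma_j}\subseteq X_{\delta_C}$, so $e_C\notin X_{\gamma_j}$; hence $e_C\models\phi(x;b^j)$, so $e_C$ realises $\set{\phi(x;b^j)\mid j\in C}$ and the consistent sets are consistent. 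If $\set{i,j}\in\mathcal I$ and some $a$ realised both $\phi(x;b^i)$ and $\phi(x;b^j)$, then $a\in X_{\beta_i}\subseteq X_{\gamma_j}$ by $\beta_i<\gamma_j$, contradicting the second conjunct of $\phi(x;b^j)$. Hence $\phi$ has $\mathsf P$.

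Conversely, let $\phi(x;y)$ witness $\mathsf P$ via $(b^j\mid j\in J)$; by \Cref{rem:negcons} we may assume some $a^\ast$ lies in none of the sets $\phi^j\coloneqq\phi(\monster;b^j)$. For each $C\in\mathcal C$ pick $a_C\models\set{\phi(x;b^j)\mid j\in C}$: by \ref{point:cmaximal}, for $j\notin C$ there is $i\in C$ with $\set{i,j}\in\mathcal I$, so $a_C\notin\phi^j$; thus $a_C\in\phi^j\iff j\in C$, whence in particular the $a_C$ are pairwise distinct and each $\phi^j$ is a proper subset of $\monster^{\abs x}$. I then define $\psi$ by a case distinction on a tag carried by $y$ so that, up to the thickening described below, $X_{\alpha_C}=\set{a_C}$, $X_{\beta_j}=\phi^j$, $X_{\gamma_j}=\monster^{\abs x}\setminus\phi^j$ and $X_{\delta_C}=\monster^{\abs x}\setminus\set{a_C}$. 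A routine check shows that the inclusions among these sets match the strict inequalities of $\sigof{\mathcal P}$ and that the forbidden ``cross-level'' inclusions all fail: for instance $X_{\beta_i}\subseteq X_{\gamma_j}$ holds iff $\phi^i\cap\phi^j=\emptyset$, which by \ref{point:pairsdecided} (using $a_C$ for some $C\supseteq\set{i,j}$) happens exactly when $\set{i,j}\in\mathcal I$; the non-inclusions pointing ``upward'' out of a level use the point $a^\ast$.

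What this clean picture still misses are the within-level non-inclusions $X_{\beta_i}\not\subseteq X_{\beta_j}$ and $X_{\gamma_i}\not\subseteq X_{\gamma_j}$ for $i\ne j$, together with $X_{\beta_j}\not\subseteq X_{\alpha_C}$ and $X_{\delta_C}\not\subseteq X_{\gamma_j}$ --- all of which would fail if some $\phi^j$ were contained in another, or were empty or a singleton; securing these is, I expect, the only genuinely fiddly part of the argument. It is handled exactly as the analogous incomparabilities in \Cref{thm:soppop} and in the $\mathsf{IP}$ theorem: adjoin to $x$ one further tagged coordinate, equip each $b^j$ with a private marker element (pairwise distinct in $j$), put it into $X_{\beta_j}$, remove it from $X_{\gamma_j}=\monster^{\abs x}\setminus X_{\beta_j}$, and keep all markers out of every $X_{\alpha_C}$; this breaks each unwanted inclusion while leaving the cross-level relations intact. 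Assembling the pieces yields a formula $\psi$ and a $\sigof{\mathcal P}$-sequence witnessing $\sopp{\mathcal P}$, completing the proof.
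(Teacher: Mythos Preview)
Your proposal is correct and follows essentially the same strategy as the paper. In the $\sopp{\mathcal P}\Rightarrow\mathsf P$ direction you use the same formula $\phi(x;b^j)=\psi(x;d^{\beta_j})\land\neg\psi(x;d^{\gamma_j})$, but your consistency argument is actually cleaner than the paper's: rather than using compactness to derive a contradiction from $\alpha_C\not<\delta_C$, you directly pick a witness $e_C\in X_{\alpha_C}\setminus X_{\delta_C}$ and observe that it realises the whole (possibly infinite) set $\set{\phi(x;b^j)\mid j\in C}$ at once. In the $\mathsf P\Rightarrow\sopp{\mathcal P}$ direction your four-level construction (singletons $\set{a_C}$, the sets $\phi^j$ with added markers, and their complements) is exactly the paper's; the paper implements the markers by choosing the $e^j$ as pairwise distinct realisations of $\set{\neg\phi(x;b^i)\mid i\in J}$ via \Cref{rem:negcons}, whereas you propose putting them in a fresh tagged coordinate, which works equally well provided the markers are kept disjoint from all $a_C$, from $a^\ast$, and from every $\phi^j$. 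Your ``routine check'' is indeed routine, but note that the paper does spell it out level pair by level pair; if you want a complete write-up you should do the same, since a couple of the cases (e.g.\ $X_{\gamma_i}\not\subseteq X_{\beta_j}$ via \ref{point:allareinconsistentwithsome}, and $X_{\delta_C}\not\subseteq X_{\gamma_j}$) do use specific features of the construction.
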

\begin{proof}
  Right to left, suppose that $\psi(x;y)$ has \sopp{\mathcal P}.  We show that $\phi(x;y_1,y_2)\coloneqq\psi(x;y_1)\land \neg \psi(x; y_2)$ has $\mathsf{P}$, witnessed by $b^j\coloneqq(d^{\beta_j}, d^{\gamma_j})$. Suppose that $\set{i,j}\in \mathcal I$. By definition, the pair $\set{\psi(x;\beta_j),  \psi(x; \gamma_i)}$ is inconsistent, so   $\models \psi(x;\beta_j)\implica \neg \psi(x; \gamma_i)$. Hence $\phi(x;\beta_i,\gamma_i)\land \phi(x;\beta_j,\gamma_j)$ is inconsistent as well. We conclude by showing that, for every $C\in \mathcal C$, the set $\set{\phi(x; b^j)\mid j\in C}$ is consistent.   For every $j\in C$ we have $\alpha_C< \beta_j$, hence by definition of $\sopp{\mathcal P}$ we have   $\models\psi(x; d^{\alpha_C})\implica \psi(x; d^{\beta_j})$. Therefore, it is enough to show consistency of the set of formulas $\psi(x; d^{\alpha_C})\cup \set{\neg\psi(x; d^{\gamma_j})\mid j\in C}$.

If this is not consistent, by compactness there are $k\in \omega\setminus \set 0$  and $u\in C^{[k]}$, such that $\models\psi(x;d^{\alpha_C})\implica\bigvee_{j\in u} \psi(x;d^{\gamma_j})$. But for $j\in u\subseteq C$ we have  $\models\psi(x;d^{\gamma_j})\implica \psi(x; d^{\delta_C})$, hence $\models\psi(x;d^{\alpha_C})\implica \psi(x;d^{\delta_C})$. This implies $\alpha_C<\delta_C$, which contradicts property~\ref{point:Cu}.

    Left to right, by \Cref{rem:negcons} there are witnesses $\phi(x;y)$ and $(b^j\mid j\in J)$   of $\mathsf{P}$ with the additional property that $\set{\neg\phi(x; b^j)\mid j\in J}$  has infinitely many realisations. Fix the following.
    \begin{itemize}
    \item  Pairwise distinct parameters $0,1,2,3$, of the same sort.
    \item For $j\in J$,  pairwise distinct realisations $e^j\models \set{\neg \phi(x; b^i)\mid i\in J}$.
    \item For every $C\in \mathcal C$,  some $a^C\models \set{\phi(x; b^j)\mid j\in C}$. 
    \end{itemize}
 Let $t$ and $z$ be tuples of variables of the same length as $0$ and $x$ respectively, and define the partitioned formula $\psi(x;y,z,t)$ as follows:
    \begin{align*}
      \psi(x;y,z,t)\coloneqq\phantom{.}& \Bigl((t=0)\land (x=z)\Bigr)
                 \lor\Bigl((t=1)\land (\phi(x; y)\lor x=z)\Bigr)\\
                 \phantom{.}\lor\phantom{.}&  \Bigl((t=2)\land (\neg \phi(x;y)\land x\ne z)\Bigr)
                 \lor  \Bigl((t=3)\land (x\ne z)\Bigl)
    \end{align*}
Now define the following parameters, where ``$\smiley$'' denotes an arbitrary (and unimportant) choice of a tuple of parameters in $\monster^{\abs y}$.
\begin{itemize}
\item For each $C\in \mathcal C$, set $d^{\alpha_C}\coloneqq (\smiley,a^C, 0)$ and $d^{\delta_C}\coloneqq(\smiley,a^C, 3)$.
\item For each $j\in J$, set $d^{\beta_j}\coloneqq(b^j,e^j, 1)$ and $d^{\gamma_j}\coloneqq(b^j, e^j, 2)$.
\end{itemize}
We prove that this yields the required embedding of posets.  Since the consistency pattern $\mathcal P$ is maximal, by \Cref{rem:weakmax} for $C\ne C'\in \mathcal C$ we have that $\set{\phi(x; b^j)\mid j\in C\cup C'}$ is inconsistent, hence the sets defined by the formulas  $\psi(x;d^{\alpha_C})$ are pairwise distinct  singletons. Similarly, the formulas  $\psi(x;d^{\delta_C})$ define pairwise distinct complements of singletons. Notice that no $\psi(x;d^-)$ is empty, nor the full $x=x$, hence these are indeed minimal and maximal elements of our poset. Observe straight away that $\models \psi(x;d^{\alpha_C})\implica\psi(x;d^{\delta_{C'}})$ if and only if $\alpha_C<\delta_{C'}$. It is furthermore clear that complements of singletons are not included in singletons, nor in different complements of singletons, and that different singletons are not included in each other, hence the map we defined, when restricted to the subposet given by levels $0$ and $3$ of $\sigof{\mathcal P}$, is indeed an embedding.

It is easy to see that all definable sets at levels $1$ and $2$ have at least two points, and that so do their complements. Hence,  $\psi(x;d^{\alpha_C})$ includes no other set defined above, and $\psi(x;d^{\delta_C})$ is included in no other set defined above.

Consider now the formulas $\psi(x; d^{\beta_j})$. The $e^j$ ensure that none of them implies any other; in particular, the map $\beta_j\mapsto\psi(x; d^{\beta_j})$ is injective. If $j\in C$, then   $\models\psi(x; d^{\alpha_j})\implica\psi(x; d^{\beta_j})$ by construction; conversely, the latter can only happen if $j\in C$ because of \ref{point:cmaximal}. Therefore, the subposet given by levels $0$ and $1$  is correctly embedded. Dually, for the subposet given by levels $2$ and $3$, since our construction is symmetric.

Observe that $\psi(x;d^{\gamma_j})$ is equivalent to $\neg \psi(x;d^{\beta_j})$.  By~\ref{point:pairsdecided},  $\models\phi(x;b^i)\implica \neg \phi(x;b^j)$ if and only if  $\set{i,j}\in \mathcal I$. It follows easily that $\models\psi(x; d^{\beta_j})\implica \psi(x; d^{\gamma_i})$ if and only if $\set{i,j}\in \mathcal I$, if and only if $\beta_j<\gamma_i$. To prove that the subposet given by levels $1$ and $2$ is also correctly embedded, we have to show that for no $i,j\in J$ we have $\models\psi(x; d^{\gamma_i})\implica \psi(x;d^{\beta_j})$. Assume this is the case for some $i, j\in J$.  By \ref{point:allareinconsistentwithsome} there is $k\in J$ such that $k\ne i$ and  $\set{j,k}\in \mathcal I$. But then by construction $\models \psi(x; d^{\beta_k})\implica \psi(x; d^{\gamma_j})$, hence $\models\psi(x; d^{\beta_k})\implica\psi(x; d^{\beta_i})$, which we already proved impossible.

Above, we showed that every pair of consecutive levels of $\sigof{\mathcal P}$ is correctly embedded.   Inclusion of definable sets is automatically transitive, so in order to finish we only need to show that we do not have any additional inclusions between different levels. Note that we already proved that no definable set coming from level $n$ of $\sopp{\mathcal P}$ is included in any definable set coming from level $m<n$, and that the inclusions between levels $0$ and $3$ are precisely the correct ones. If we prove that there are no unwanted inclusions between levels $0$ and $2$, then the symmetry of our construction will also tell us that there is no unwanted inclusion between levels $1$ and $3$. By property~\ref{point:distance2}, this amounts to proving $j\notin C\iff\models\neg\phi(a^C;b^j)\land (a^C\ne e^j)$. If $j\in C$, then by construction $\models\phi(a^C;b^j)$. If $j\notin C$, again by construction $\models \neg \phi(a^C;b^j)$; by  \ref{point:cmaximal} there is $i\in C$ with $\set{i,j}\in \mathcal I$, hence $\models a^C\ne e^j$ because $\models \phi(a^C;b^i)$ but $\models\neg \phi(e^j;b^i)$.
\end{proof}

\section{Future directions}\label{sec:future}
We conclude by listing some natural questions and problems.

Clearly, if $\age(\Sigma)=\age(\Sigma')$, then $\sop\Sigma$ and $\sop{\Sigma'}$ are equivalent. The converse is false: if $\Sigma\coloneqq\omega^{<\omega}$ with the usual ordering, then it can be shown that $\sop\Sigma$ is equivalent to the usual $\mathsf{SOP}$, but clearly $\omega^{<\omega}$ and $\omega$ have different age. Note also that, by passing to negations, posets which are the reverse of each other define the same property. 
\begin{question}Is there a nice characterisation of those pairs of posets $\Sigma,\Sigma'$ such that, at the level of theories, $\sop\Sigma$ is equivalent to $\sop{\Sigma'}$?
\end{question}

Having shown that several known dividing lines may be \emph{defined} uniformly, it is natural to ask whether they may be \emph{treated} uniformly. For instance, consider \emph{one-variable theorems}, that is, statements of the form: if there is a partitioned formula $\phi(x;y)$ with property $\mathsf{P}$, then there is one with $\abs x=1$. Such statements have been proven for $\mathsf{OP}, \mathsf{IP}, \mathsf{SOP}_1, \mathsf{TP}_2$, $\mathsf{ATP}$, by using different techniques. It would be nice to have uniform treatment.
\begin{question}
Is there a uniform proof of one-variable theorems for $\sop\Sigma$?
\end{question}

  It is easy to see that every $\sop\Sigma$ can be phrased as a \emph{weak $\Gamma$-property}  in the sense of \cite[Definition~5.17]{shelah_what_1999}. 
\begin{question}
Is every weak $\Gamma$-property equivalent to one of the form $\sop\Sigma$?
\end{question}
Part of the following question is a special case of the previous one. By \Cref{rem:sop1sop2} the analogous question for $\mathsf{SOP}_1$ has a positive answer.
\begin{question}
  Which of these properties can be defined via a maximal consistency pattern, or more generally as $\sop{\Sigma}$ for a suitable $\Sigma$?
  \begin{enumerate}
  \item $\mathsf{TP}$.
  \item $\mathsf{SOP}_n$ for $n\ge 4$.
  \item Not being rosy.
  \end{enumerate}
\end{question}
By \Cref{rem:otherproperties}, we already know that, for $k\ge 2$, the answer for $\mathsf{IP}_k$ is negative. This motivates our final question.
\begin{question}
Is it possible to phrase in similar terms $\mathsf{IP}_k$, for $k\ge 2$,  by replacing posets by some other kind of structures?
\end{question}

\section*{Acknowledgements}
We thank G.~Conant for suggesting to parameterise the original definition of $\mathsf{SUP}$, which led to \Cref{defin:sigmasop}, and A.~Papadopoulos and N.~Ramsey for pointing out to us the alternative definition of $\mathsf{SOP}_3$ in \cite[Claim~2.19]{shelah_toward_1996} and the definition of weak $\Gamma$-property in \cite{shelah_what_1999}.

R.~Mennuni was supported  by the Italian project  PRIN 2017: ``Mathematical Logic: models, sets, computability'' Prot.~2017NWTM8RPRIN and by the German Research Foundation (DFG) via HI 2004/1-1 (part of the French-German ANR-DFG project GeoMod) and under Germany’s Excellence Strategy EXC 2044-390685587, `Mathematics Münster: Dynamics-Geometry-Structure'.

\end{document}